\documentclass[a4paper]{article}
\usepackage[utf8]{inputenc}

\usepackage[margin=1in]{geometry}
 \makeatletter
 \let\@fnsymbol\@arabic
 \makeatother

\usepackage{amsmath, amssymb}
\usepackage[hidelinks]{hyperref}

\usepackage{xcolor}

\usepackage{amsthm}
\usepackage[noabbrev]{cleveref}
\newtheorem{thm}{Theorem}[section]
\newtheorem{lm}[thm]{Lemma}

\newtheorem{prop}[thm]{Proposition}

\theoremstyle{definition}

\newtheorem{rmk}[thm]{Remark}

\newcommand{\eps}{\varepsilon}
\renewcommand{\phi}{\varphi}

\newcommand{\RR}{\mathbb R}
\newcommand{\FF}{\mathbb F}
	\newcommand{\fq}{\FF_q}

\renewcommand{\leq}{\leqslant}
\renewcommand{\geq}{\geqslant}

\newcommand{\vspan}[1]{\left \langle #1 \right \rangle}

\newcommand{\sett}[2]{ \left\{ #1 \, \, || \, \, #2 \right \} }

\newcommand{\one}{\mathbf 1}

\newcommand{\ma}{\mathcal A}
\newcommand{\mb}{\mathcal B}
\newcommand{\mc}{\mathcal C}

\newcommand{\mg}{\mathcal G}

\newcommand{\ml}{\mathcal L}

\newcommand{\mo}{\mathcal O}
\renewcommand{\mp}{\mathcal P}
\newcommand{\mq}{\mathcal Q}

\newcommand{\ms}{\mathcal S}
\newcommand{\mt}{\mathcal T}

\newcommand{\mx}{\mathcal X}
\newcommand{\my}{\mathcal Y}
\newcommand{\mz}{\mathcal Z}

 \DeclareMathOperator{\PG}{PG}
    \newcommand{\pg}{\PG}

 \DeclareMathOperator{\PGL}{PGL}
    \newcommand{\pgl}{\PGL}

 \DeclareMathOperator{\PGamL}{P\Gamma L}

 \DeclareMathOperator{\PGO}{PGO}

 \DeclareMathOperator{\NO}{NO}
  \newcommand{\nono}{\overline{\NO^+}(8,2)}
 \DeclareMathOperator{\Sym}{Sym}
 \newcommand{\BP}{\boldsymbol P}
 \DeclareMathOperator{\SRG}{SRG}
 \DeclareMathOperator{\Aut}{Aut}
 \DeclareMathOperator{\Johnson}{J}
 \DeclareMathOperator{\GO}{GO}
 \DeclareMathOperator{\OS}{OS}
 \DeclareMathOperator{\Alt}{Alt}
 \DeclareMathOperator{\ASL}{ASL}
 \DeclareMathOperator{\Inv}{Inv}

\title{A sporadic strongly regular graph with parameters $(120,56,28,24)$ from a primitive action of \\ the symmetric group on $7$ elements}

\author{
Sam Adriaensen
   \thanks{Department of Mathematics and Data Science, Vrije Universiteit Brussel, Pleinlaan 2, 1050 Elsene, Belgium.
   \href{mailto:Sam.Adriaensen@vub.be}{Sam.Adriaensen@vub.be}, \href{mailto:Jan.De.Beule@vub.be}{Jan.De.Beule@vub.be}}
\and Robert F.\ Bailey
   \thanks{School of Science and the Environment, Grenfell Campus, Memorial University, Corner Brook, NL, A2H 6P9, Canada. \href{mailto:robert.bailey@mun.ca}{Robert.Bailey@mun.ca}}
\and Jan De Beule
   \footnotemark[1]
\and Morgan Rodgers
   \thanks{Department of Mathematics, RPTU Kaiserslautern-Landau, Gottlieb-Daimler-Straße 48, 67663 Kaiserslautern, Germany.
   \href{mailto:Morgan.Rodgers@rptu.de}{Morgan.Rodgers@rptu.de}
   }
}
\date{}

\begin{document}

\maketitle

\begin{abstract}
 There are up to isomorphism exactly three strongly regular graphs with parameters $(120,56,28,24)$ whose automorphism group acts primitively on the vertices.
 Two of these graphs belong to classical families:
 one is the non-orthogonality graph on anisotropic points of the hyperbolic quadric $\mathcal Q^+(7,2)$, and the other one belongs to the Johnson scheme.
 The third one is not well understood.
 In this paper, we give a description of this graph in terms of ovoids and spreads of $\mathcal Q^+(7,2)$, or equivalently in terms of overlarge sets of Steiner systems with parameters $(3,4,8)$.
\end{abstract}

\section{Introduction}


A graph $\Gamma$ is called \emph{strongly regular} with parameters $(n,k,\lambda,\mu)$ if it has $n$ vertices, is $k$-regular, and given two distinct vertices, the number of common neighbours is given by $\lambda$ if they are adjacent, and $\mu$ if they are not adjacent.
We say that $\Gamma$ is an $\SRG(n,k,\lambda,\mu)$.
A strongly regular graph is said to be \emph{primitive} if both it and its complement (which is necessarily also strongly regular) are connected, and \emph{imprimitive} otherwise. It is well-known that the only imprimitive SRGs are complete multipartite graphs and their complements.  We note that a primitive SRG has diameter 2; SRGs are essentially the diameter-2 case of {\em distance-regular graphs} (see \cite{BCN}).  For further background on strongly regular graphs, we refer the reader to the recent monograph of Brouwer and Van Maldeghem \cite{Brouwer:VanMaldeghem}.

A permutation group $G$ acting on a set $V$ is \emph{transitive} if it has only one orbit on $V$.  A transitive action is said to be \emph{primitive} if the only equivalence relations on $V$ preserved by $G$ are the trivial or universal relations (i.e.\ where the equivalence classes are singletons or the whole of $V$, respectively), and is said to be \emph{imprimitive} otherwise.  For further background information on permutation groups, see \cite{Cameron}.
We note that if $G\leq\Aut(\Gamma)$ for some strongly regular graph $\Gamma$ and $G$ acts primitively on the vertex set $V(\Gamma)$, then $\Gamma$ is primitive (in the strongly regular graph sense), but the converse is not true in general: it is possible for a SRG $\Gamma$ to be primitive but for $\Aut(\Gamma)$ to be imprimitive (in the permutation group sense).

In this paper, we consider strongly regular graphs $\Gamma$ where a subgroup $G \leq \Aut(\Gamma)$ of automorphisms of $\Gamma$ acts primitively on $V(\Gamma)$.  Then $G$ naturally acts on the set $V(\Gamma)^2$ of ordered pairs of vertices, and the orbits of this action are the \emph{orbitals} of the action of $G$ of $V(\Gamma)$.
The number of orbitals is called the \emph{rank} of the action of $G$ on $V(\Gamma)$.
Equivalently, given any vertex $x \in V(\Gamma)$, the rank equals the number of orbits of the action of the stabiliser $G_x$ of $G$ acting on $V(\Gamma)$.
If $\Gamma$ is neither complete nor empty, the rank of $G$ has to be at least 3.
Graphs with a rank 3 automorphism group are necessarily strongly regular, and these have all been classified: see \cite{Brouwer:VanMaldeghem} for details.

More specifically, we are interested in $\SRG(120,56,28,24)$'s.  There are many known constructions of such graphs, including one-off examples, members of infinite families, or prolific constructions yielding multiple non-isomorphic graphs with the same parameters.  For instance, there is an imprimitive vertex-transitive example obtained by Brouwer, Ivanov, and Klin \cite{Klin}, the non-collinearity graphs of partial geometries with parameters $(7,8,4)$ (such as in \cite{Cohen}; see e.g.\ \cite[\S 8.6]{Brouwer:VanMaldeghem} for background), and the prolific constructions of Wallis \cite{Wallis} and of Goethals and Seidel \cite{Goethals:Seidel}.
In addition, one can obtain extra graphs through switching, see e.g.\ Schmidt \cite{Schmidt}, who used switching to obtain several pairs of $\SRG(120,56,28,24)$'s (or rather their complements) that are only isomorphic in the quantum sense, but not in the classical sense.

However, there are only three $\SRG(120,56,28,24)$'s with a primitive automorphism group.  This was verified by the second author and his students using GAP \cite{GAP4} and GRAPE \cite{GRAPE} using the following approach, as part of a larger analysis.  GAP contains libraries of all primitive groups on up to $4095$ points, obtained by the efforts of several authors (see  \cite{Coutts,Dixon:Mortimer,RoneyDougal,RoneyDougal:Unger}).  Also, the GRAPE package contains a function (which utilises methods developed by Praeger and Soicher \cite{Praeger:Soicher}), \mbox{\texttt{VertexTransitiveDRGs}}, which for a given transitive permutation group $G$ determines the distance-regular (and thus strongly regular) graphs $\Gamma$ with $G\leq\Aut(\Gamma)$, in terms of which combinations of orbitals of $G$ yield the edge set of each possible $\Gamma$.  By applying this function to each primitive group from the library on 120 points, and eliminating duplicates, it was determined that only three such examples with parameters $(120,56,28,24)$ exist.  (This can also be seen in \cite[Table 11.9]{Brouwer:VanMaldeghem}: while this table considers $\Aut(\Gamma)$ with rank at most $10$, from the primitive group library we know that the largest rank of a primitive group on $120$ points is $8$.)
Two of them are classical and well understood:
\begin{enumerate}
 \item The graph $\nono$, defined on the set of anisotropic points of $\pg(7,2)$ with respect to a hyperbolic quadratic form $f$.
 Two vertices are adjacent if they are not orthogonal for the alternating bilinear form associated to $f$.
 The full automorphism group is $\PGO^+(8,2)$, with rank 3.
 \item The distance 1-or-3 graph of the Johnson graph $\Johnson(10,3)$, where the vertices are the 3-element subsets of $\{1,\dots,10\}$, and where two vertices are adjacent if they share either 2 or 0 elements.
 The full automorphism group is the symmetric group $\Sym_{10}$ in its natural action on subsets, and the rank of this action is 4.
\end{enumerate}

The third graph, which we denote by $\Gamma^*$, is listed in \cite[Table 11.9]{Brouwer:VanMaldeghem}, but no more information is presented.
Its automorphism group $G^*$ is isomorphic to the symmetric group $\Sym_7$, and the stabiliser of a vertex has orbits of sizes $1, 7, 14, 14, 21, 21, 42$.
The purpose of this paper is to give a description of this graph.

The action of $G^*$ on $V(\Gamma^*)$ is generously transitive and hence yields a $6$-class association scheme.
$\Gamma^*$ can then be obtained as a fusion of certain relations in this scheme.
Interestingly, the graph $\nono$ can also be obtained as a fusion in the same scheme.

We present two descriptions of the association scheme.
One which emphasises how $\Sym_7$ arises as automorphism group, and one which emphasises how $\nono$ can be obtained as a fusion of the scheme:
\begin{enumerate}
 \item In the first description, the vertices of the scheme are certain overlarge sets of Steiner systems $\OS(4,8)$.
 This is a partition $\ms$ of $\binom A 4$, where $A$ is a 9-element set, into 9 classes $\sett{\mb_a}{a \in A}$ such that each $\mb_a$ constitutes a Steiner $(3,4,8)$-system on $A \setminus \{a\}$.
 We choose a two-element set $B \subset A$, which allows each $\ms$ to be split into $\ms_B = \sett{\mb_a}{a \in B}$ and $\ms_{\overline B} = \sett{\mb_a}{a \in A \setminus B}$.
 The relation containing a pair $(\ms,\mt)$ is then determined by the intersections between $(\ms_B, \ms_{\overline B})$ and $(\mt_B, \mt_{\overline B})$.
 The action of $G^*$ on the elements of $A \setminus B$ is then given by the natural action of $\Sym(A \setminus B) \cong \Sym_7$.
 \item By the work of Cameron and Praeger \cite{Cameron:Praeger}, there is a one-to-one correspondence between $\OS(4,8)$'s and spreads of $\mq^+(7,2)$.
 Using the principle of triality, we can translate the association scheme as being defined on a set of ovoids of $\mq^+(7,2)$.
 We find a one-to-one correspondence between these ovoids and the anisotropic points of $\pg(7,2)$.
 Hence, we find a description of the association scheme as being defined on the set of vertices of $\nono$, which explains why $\nono$ arises as a fusion in this scheme.
\end{enumerate}

We will also check that $\Gamma^*$ cannot be obtained from the constructions of Wallis \cite{Wallis} or Goethals--Seidel \cite{Goethals:Seidel}, or from a partial geometry.

A large part of our analysis will be done by the aid of the software GAP \cite{GAP4}, and the GAP packages GRAPE \cite{GRAPE} and FinInG \cite{fining}.
Results obtained through computation will be labelled by ``GAP''. All GAP code has been collected in one file and is available online via a github archive: see \cite{ABDBRgap}.

\section{Preliminaries}

We denote the projective space arising from an $(n+1)$-dimensional vector space $V$ over the finite field $\fq$ of $q$ elements by $\pg(n,q)$.
Given a vector $(x_0, \dots, x_n) \in \fq^{n+1}$, we denote by $(x_0: \cdots: x_n)$ the projective point $\vspan{(x_0,\dots,x_n)}$ in $\pg(n,q)$.

We will give an overview of results on quadrics in finite projective spaces and association schemes.
The reader can consult \cite{Brouwer:VanMaldeghem} as a reference.

\subsection{Quadrics}

A \emph{quadric} in $\pg(n,q)$ is the zero locus $\mq$ of a homogeneous quadratic form $f(X)$, where $X$ denotes a vector of variables.
Related to $f$ is the symmetric bilinear form $B(X,Y) = f(X+Y) - f(X) - f(Y)$.
A point is called \emph{singular} if its coordinate vectors $x$ satisfy the property that $Y \mapsto B(x,Y)$ is the zero function.
The quadric $\mq$ is called \emph{non-singular} if none of the points on $\mq$ are singular.
The set of singular points of $\mq$ forms a subspace $\rho$ of $\pg(n,q)$, and $\mq$ is a cone with vertex $\rho$, and as base a non-singular quadric.

If there are no singular points, then we can use the bilinear form $B$ to take orthogonal complements.
We map a subspace $U$ of $V$ to
\[
 U^\perp = \sett{v \in V}{(\forall u \in U)(B(u,v) = 0)}.
\]
This induces an involution on the subspaces of $\pg(n,q)$ that reverses inclusion, and maps $k$-spaces to $(n-k-1)$-spaces.
We call $\perp$ the \emph{polarity} associated to $\mq$.
Given two subspaces $\pi$ and $\rho$, we write $\pi \perp \rho$ if $\pi \subseteq \rho^\perp$ or equivalently $\rho \subseteq \pi^\perp$.
A subspace $\pi$ is called \emph{totally isotropic} if it is completely contained in the quadric, in which case $\pi \perp \pi$.
A point is called \emph{isotropic} or \emph{anisotropic}, according to whether or not it is on the quadric.
If $P, Q \in \mq$, then the line $PQ$ is totally isotropic if and only if $P \perp Q$.
In that case, we say that $P$ and $Q$ are \emph{collinear} points of $\mq$.
If $q$ is even (and $n$ is odd), then $B$ is alternating, and every point is orthogonal to itself according to $\perp$, regardless of whether it is on the quadric.
In that case, for any two points $P$ and $Q$, we have $P \perp Q$ if and only if the line $PQ$ is either totally isotropic, or intersects the quadric in a unique point.

Suppose that $d$ is the maximum projective dimension of the totally isotropic subspaces of $\mq$.
Then the totally isotropic $d$-spaces are called \emph{generators}, and the \emph{rank} of $\mq$ is $d+1$.
The quadrics over finite fields have been classified.
Up to isomorphism, in $\pg(n,q)$ there are $2$ non-singular quadrics if $n$ is odd, namely the elliptic quadric $\mq^-(n,q)$ of rank $\frac{n-1}2$ and the hyperbolic quadric of rank $\frac{n+1}2$, and a unique non-singular quadric if $n$ is even, namely the parabolic quadric $\mq(n,q)$ of rank $\frac n2$.
If $\mq$ is a non-singular quadric, then none of the anisotropic points are singular, unless $\mq$ is parabolic and $q$ is even, in which case there is a unique singular point called the \emph{nucleus} of $\mq(n,q)$.

Given a quadratic form $f$ on the vector space $V \cong \fq^{n+1}$ defining the quadric $\mq^\eps(n,q)$, with $\eps$ either $-$, $+$, or the empty character, we denote by $\GO^\eps(n+1,q)$ the group of invertible linear operators $\phi$ on $V$ such that $f \circ \phi = \lambda f$ for some non-zero scalar $\lambda$.
We denote by $\PGO^\eps(n+1,q)$ the group of collineations in $\pgl(n,q)$ arising from the action of $\GO^\eps(n+1,q)$ on $V$.

A set of pairwise non-collinear points is called a \emph{partial ovoid} of $\mq$.
A partial ovoid contains at most one point of each generator.
If it contains a point of each generator, it is called an \emph{ovoid}.
A set of pairwise disjoint generators is called a \emph{partial spread} of $\mq$.
Every point of $\mq$ is on at most one generator of a partial spread.
If every isotropic point is on a generator of the partial spread, i.e.\ if the partial spread partitions the isotropic points, we call it a \emph{spread}.

\subsection{Systems of generators and the triality}

The hyperbolic quadric $\mq^+(n,q)$ has several remarkable properties.
It generators can be split into two systems: generators $\pi$ and $\rho$ belong to the same system if and only if $\dim \pi \cap \rho \equiv \frac{n-1}2 \pmod 2$.
We note that $\PGO^+(n+1,q)$ acts transitively on the generators of $\mq^+(n,q)$, and the systems form blocks of imprimitivity.

In the case $n = 7$, there is a special map associated to $\mq^+(n,q)$ called the \emph{triality} map.
Let $\mp$ and $\ml$ be the sets of points and lines respectively on $\mq^+(7,q)$, and $\mg_1, \mg_2$ the systems of generators.
Then the triality map $\tau$ is a permutation of order 3 on $\mp \cup \ml \cup \mg_1 \cup \mg_2$ that maps $\mp$ to $\mg_1$, $\mg_1$ to $\mg_2$, $\mg_2$ to $\mp$, $\ml$ to itself, and respects incidence.
Note that $\tau$ permutes ovoids, spreads of generators in $\mg_1$, and spreads of generators in $\mg_2$.
Ovoids and spreads of $\mq^+(7,q)$ have size $q^3+1$.

\subsection{Association schemes}

Consider a finite set $\mx$, and a relation $R \subseteq \mx^2$ on $\mx$.
The \emph{adjacency matrix} of $R$ is defined as the matrix $A \in \RR^{\mx \times \mx}$ with $A(x,y) = 1$ if $(x,y) \in R$, and $A(x,y) = 0$ otherwise.
Now consider a set $\{R_0, \dots, R_d\}$ of relations on $\mx$, and let $A_i$ denote the adjacency matrix of $R_i$.
Then we call $(R_0, \dots, R_d)$, or equivalently $\ma = (A_0, \dots, A_d)$, a ($d$-class) \emph{symmetric association scheme} if
\begin{enumerate}
 \item $A_0 = I$,
 \item $A_0 + \ldots + A_d$ is the all-one matrix, which we denote by $J$,
 \item all matrices in $\ma$ are symmetric,
 \item the vector subspace $\RR[\ma]$ of $\RR^{\mx \times \mx}$ generated by $\ma$ is a commutative algebra for the ordinary matrix product.
\end{enumerate}

If $\ma = (A_0, \dots, A_d)$ is a symmetric association scheme, then $A_0, \dots, A_d$ can be diagonalised simultaneously.
$\RR^\mx$ admits an orthogonal decomposition $W_0 \oplus \dots \oplus W_d$ such that every eigenspace of every $A_i$ is the sum of some of the $W_j$ spaces.
Moreover, $W_0$ is spanned by the all-one vector.
The decomposition is unique up to permutation of $W_1, \dots, W_d$.
The $(d+1) \times (d+1)$-matrix $\BP$ where $\BP(j,i)$ is the eigenvalue of $A_i$ on the space $W_j$ is called the \emph{matrix of eigenvalues} of $\ma$.

The action of a group $G$ on $\mx$ is called \emph{generously transitive} if for each $x$ and $y$ in $X$, there exists some $g \in G$ with $x^g = y$ and $y^g = x$.
Equivalently, $G$ acts transitively, and its orbitals are symmetric.
In that case, the orbitals of $G$ yield a asymmetric $d$-class association scheme, where $d+1$ is the rank of $G$ acting on $\mx$.

\bigskip

If $(R_0, \dots, R_d)$ is an association scheme with eigenvalue matrix $\BP$, and we take a union of relations $R = R_{i_1} \cup \dots \cup R_{i_t}$, then $A = A_{i_1} + \ldots + A_{i_t}$ is the adjacency matrix of the graph $\Gamma$ on $\mx$ with adjacency relation $R$.
Its eigenvalues are given by $\BP (e_{i_1} + \ldots + e_{i_t})$, where $e_i$ denotes the $i$\textsuperscript{th} standard basis vector of $\RR^{d+1}$.
Since every relation of an association scheme must be regular, $\Gamma$ is a regular graph.
It is well-known that a regular graph is strongly regular if and only if it has at most 3 distinct eigenvalues.
Thus, $\Gamma$ is strongly regular if and only if $\BP (e_{i_1} + \ldots + e_{i_t})$ has at most 3 distinct entries.
If these entries are $k > \theta_1 \geq \theta_2$, then the strongly regular graph has parameters $\SRG(|\mx|,k, \theta_1 + \theta_2 + \theta_1 \theta_2 + k,\theta_1 \theta_2 + k)$.

\section{Overlarge sets of Steiner systems and spreads \texorpdfstring{of $\mq^+(7,2)$}{}}

\subsection{Overlarge sets of Steiner systems}

Suppose that $A$ is a set of $v$ points.
Denote by $\binom A k$ the set of subsets of $A$ of size $k$.
Consider a set $\mb \subseteq \binom Ak$, and call the elements of $\mb$ \emph{blocks}.
Then $\mb$ is called a \emph{$t$-$(v,k,\lambda)$ design} if any $T \in \binom At$ is contained in exactly $\lambda$ blocks.
If $\lambda=1$, the design is also called a \emph{Steiner system} with parameters $(t,k,v)$.

Now suppose that $A$ has size $v+1$, and suppose that we can partition the sets of $\binom A k$ into $v+1$ classes $\sett{\mb_a}{a \in A}$, such that each $\mb_a$ is a $(k-1)$-$(v,k,1)$ design on $A \setminus \{a\}$.
We call $\sett{\mb_a}{a \in A}$ an \emph{overlarge set of Steiner systems}, and denote it as an $\OS(k,v)$.

There are up to isomorphism two non-isomorphic $\OS(4,8)$.
This was first proven by Breach and Street \cite{Breach:Street}, and reproven by
Cameron and Praeger \cite{Cameron:Praeger} in a particularly elegant way.
Let us recall their results.
\begin{enumerate}
 \item Take $A$ to be the set of points of $\PG(1,8)$.
 Define
 \[
  \mb_{(1:0)} = \sett{ \{(x_1:1), \dots, (x_4:1) \} }{\{x_1,\dots,x_4\} \in \binom {\FF_8} 4, \, x_1 + \ldots + x_4 = 0 \}}.
 \]
 For each point $P$ of $\pg(1,8)$, let $\mb_P$ be the image of $\mb_{(1:0)}$ of a collineation $\phi \in \PGamL(2,8)$ that maps $(1:0)$ to $P$.
 The set $\mb_P$ is independent of the choice of $\phi$.
 Then $\sett{\mb_P}{P \in A}$ is an $\OS(4,8)$ on $A$.
 Its automorphism group is $\PGamL(2,8)$.

 \item Take $A = \FF_3^2$. Define
 \[
  \mb_0 = \sett{\{\pm v, \pm w\}}{v,w \in \FF_3^2 \text{ linearly independent}} \cup \sett{\{v,w,w+v,w-v\}}{v_1 w_2 - v_2 w_1 = 1}.
 \]
 Let $\mb_v$ be image of $\mb_0$ under the translation $w \mapsto w + v$ for each $v \in \FF_3^2$.
 Then $\sett{\mb_v}{v \in \FF_3^2}$ is an $\OS(4,8)$.
 Its automorphism group is $\ASL(2,3)$.
\end{enumerate}
There are $| \Sym_9 : \PGamL(2,8) | = 240$ $\OS(4,8)$'s of type (1), and $|\Sym_9 : \ASL(2,3) | = 1680$ of type (2).
Since both the $\PGamL(2,8)$ and $\ASL(2,3)$ seen as subgroups of $\Sym_9$ are subgroups of the alternating group $\Alt_9$, there are two $\Alt_9$ orbits of size $120$ on the $\OS(4,8)$'s of type (1), and two $\Alt_9$ orbits of size $840$ on the $\OS(4,8)$'s of type (2).
Moreover, both $\PGamL(2,8)$ and $\ASL(2,3)$ act 2-transitively.
Hence, if $G$ is the setwise stabiliser in $\Alt_9$ of a 2-element subset, then $G$ has the same orbits as $\Alt_9$ on the $\OS(4,8)$'s.

\subsection{Spreads \texorpdfstring{of $\mq^+(7,2)$}{}}

Following Cameron and Praeger \cite{Cameron:Praeger}, we can represent $\mq^+(7,2)$ using the following vector space $V$ and quadratic form $f$:
\begin{align}
 \label{Eq:CP}
 V = \sett{(x_1, \ldots, x_9) \in \FF_2^9}{x_1 + \ldots + x_9 = 0} \cong \FF_2^8,
 && f(X) = \sum_{1 \leq i < j \leq 9} X_i X_j.
\end{align}
The \emph{weight} of a vector is defined as the number of coordinate positions with a non-zero entry.
If $x \in V$ has weight $n$, then over $\FF_2$,
\[
 f(x) = \binom n2 = (n-1) \frac n2 = \frac n2.
\]
Thus, $\vspan x$ is an isotropic point if and only if $x$ has weight 4 or 8.
Note that the points of weight 8 form an ovoid.

Let $A = \{1, \dots, 9\}$.
We can identify each vector $x \in \FF_2^9$ with the set $\sett{a \in A}{x_a = 1}$.
Cameron and Praeger \cite{Cameron:Praeger} observed that a set $\mb \subset \binom A4$ constitutes a $3$-$(8,4,1)$ design on $A \setminus \{a\}$ if and only if $\mb \cup \{ A \setminus \{a\}\}$ are the points of a generator in $\mq^+(7,2)$.
This in particular yields a one-to-one correspondence between spreads of $\mq^+(7,2)$ and $\OS(4,8)$'s.

The group $\Sym_9$ naturally acts on $V$ by coordinate permutation.
Coordinate permutation does not alter the quadratic form $f$, so we can see $\Sym_9$ as a subgroup of $\PGO^+(8,2)$.
Its action on the spreads of $\mq^+(7,2)$ coincides with the action of $\Sym_9$ on the $\OS(4,8)$'s induced by the natural action of $\Sym_9$ on the 9-element ground set.
Since every system of generators has the same number of spreads, and the action of $\Sym_9$ has orbits of sizes $240$ and $1680$ on the spreads of $\mq^+(7,2)$, it cannot stabilise the systems of generators.
Thus, its subgroup stabilising the systems of generators must be of index 2, hence be $\Alt_9$.
We know the orbits of $\Alt_9$ on the $\OS(4,8)$'s.
Hence, in each system of generators, the orbits of $\Alt_9$ on the spreads have sizes $120$, corresponding to $\OS(4,8)$'s of type (1), and $840$, corresponding to $\OS(4,8)$'s of type (2).
Moreover, as we argued, the subgroup of $\Alt_9$ that setwise stabilises a pair of coordinate positions has the same orbits on the spreads of $\mq^+(7,2)$.

\subsection{The association scheme}
 \label{Sec:Assoc:Spreads}

We will now present an association scheme defined on $120$ $\OS(4,8)$'s of type (1) that form an orbit under the action of $\Alt_9$.
However, we will rephrase everything to the language of spreads of $\mq^+(7,2)$.
We start with the following simple observation.

\begin{lm}
 Let $\ms$ be a spread of $\mq^+(7,2)$, and $\Sigma$ a generator from the same system as the generators of $\ms$.
 Then either $\Sigma \in \ms$, or $\Sigma$ intersects five generators of $\ms$ in a line, and is disjoint from the other four.
\end{lm}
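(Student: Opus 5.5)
The plan is a double count: count the points of $\Sigma$ covered by the members of $\ms$, and then see how many generators of $\ms$ can actually meet $\Sigma$.

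Generators of $\mq^+(7,2)$ are projective $3$-spaces, so each has $15$ points. A spread has $q^3+1 = 9$ generators, and these partition the $135$ points of $\mq^+(7,2)$. Assume $\Sigma \notin \ms$. Since $\Sigma$ lies in the same system as the members of $\ms$, the definition of the systems (with $n=7$, so $\frac{n-1}2 = 3$) gives $\dim(\Sigma \cap \Pi) \equiv 3 \pmod 2$ for every $\Pi \in \ms$. Hence $\Sigma \cap \Pi$ is either empty (dimension $-1$), a line (dimension $1$), or all of $\Sigma$. The last case means $\Sigma = \Pi$, which we have excluded. So every generator of $\ms$ meets $\Sigma$ either trivially or in a line.

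The $9$ generators of $\ms$ partition all isotropic points, so in particular they partition the $15$ points of $\Sigma$. Each line contributes $3$ points. If $m$ generators of $\ms$ meet $\Sigma$ in a line, then $3m = 15$, so $m = 5$. The remaining $9-5 = 4$ generators are disjoint from $\Sigma$.

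The only step that needs care is the parity claim for same-system generators. It comes directly from the paper's definition of the two systems, with the convention $\dim \emptyset = -1$. I do not expect any real obstacle beyond this point.
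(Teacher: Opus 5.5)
Your proposal is correct and is essentially the paper's argument. Both use that same-system generators meet in dimension $-1$, $1$ or $3$, and that the spread partitions the $15$ points of $\Sigma$ into lines of $3$ points, giving $5$ lines. You make explicit the count $q^3+1=9$ of spread elements, which the paper leaves implicit when it concludes that the other four are disjoint.
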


\begin{proof}
 Each generator $\Sigma_i$ of $\ms$ lives in the same system of generators as $\Sigma$, hence intersects it in a subspace of dimension $-1$, $1$, or $3$.
 Moreover, every point of $\Sigma$ is on a unique generator of $\ms$.
 Hence, if $\Sigma \notin \ms$, the non-empty intersections of $\Sigma$ with the elements of $\ms$ partition the points of $\Sigma$ into lines.
 Since $\Sigma$ has 15 points, and a line has three points, such a partition contains five lines.
\end{proof}

Let $V$ and $f$ be as in (\ref{Eq:CP}).
Let $Q_i$ denote the point whose coordinate vector has a single 0 in position $i$.
Consider $\Alt_9$ acting on $V$ by coordinate permutation, and let $G^*$ denote the subgroup of $\Alt_9$ that setwise stabilises coordinates 1 and 2.
Then we saw that in each system of generators of $\mq^+(7,2)$, $G^*$ has two orbits of sizes $120$ and $840$ on the spreads.
Let $\mx$ be such an orbit of size $120$.

\begin{prop}[GAP]
 \label{Prop:Gen Trans}
 The action of $G^*$ on $\mx$ is generously transitive and has rank $7$.
\end{prop}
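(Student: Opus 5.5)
We plan to reduce the statement to a description of the stabiliser of a single spread, and then to a count of orbits of that stabiliser. Since $\mx$ is a $G^*$-orbit of size $120$ and $|G^*| = |\Alt_9 \cap (\Sym_2 \times \Sym_7)| = 5040$, the stabiliser $H = G^*_\ms$ of a spread $\ms \in \mx$ has order $42$. The spread $\ms$ corresponds to an $\OS(4,8)$ of type (1), whose full stabiliser in $\Sym_9$ is a copy of $\PGamL(2,8)$ acting on $A = \PG(1,8)$. Hence $H$ is the setwise stabiliser in $\PGamL(2,8)$ of a pair $\{P_1,P_2\}$ of points of $\PG(1,8)$. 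Because $\PGamL(2,8) \leq \Alt_9$, every element of it lies in $\Alt_9$, so this identification is legitimate. Concretely, $H$ is the stabiliser of $\{0,\infty\}$, namely $\langle x\mapsto \lambda x,\ x\mapsto x^{-1},\ x\mapsto x^2\rangle$. This has order $7\cdot 2\cdot 3 = 42$, as required.

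The rank of $G^*$ on $\mx$ equals the number of $H$-orbits on $\mx$. We identify $\mx$ with the cosets $G^*/H$, that is, with the images $\ms^g$ for $g$ in $\Sym_7 \cong G^*$ acting on $A \setminus B$. By the orbit-counting lemma, the rank equals the permutation character inner product $\langle 1_H^{G^*}, 1_H^{G^*}\rangle$, which is $\frac{1}{|H|}\sum_{h\in H}\mathrm{fix}(h)$. The number of fixed points of $h$ on $\mx$ is $|C_{G^*}(h)|\cdot|h^{G^*}\cap H|/|H|$. We would compute this class by class: $H \cong \mathrm{AGL}(1,7)$-like of order $42$ (it is $C_7 \rtimes C_6$), with elements of orders $1,2,3,6,7$. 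We determine their cycle types on the $7$ points of $A\setminus B$ and on the pair $B$, and check that the number of double cosets $H\backslash G^*/H$ comes out to $7$. As a consistency check, the suborbit lengths must be $1,7,14,14,21,21,42$, which sum to $120$.

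For generous transitivity, we will show that every double coset $HgH$ satisfies $Hg^{-1}H = HgH$. Equivalently, every irreducible constituent of $1_H^{G^*}$ is real and multiplicity-free, and the orbitals are self-paired. Since all characters of $\Sym_7$ are rational, reality is automatic. It then suffices to show that $1_H^{G^*}$ is multiplicity-free, which holds if and only if the number of constituents counted with multiplicity squared equals $7$, with $7$ distinct constituents. This step is the main obstacle. We would decompose $1_H^{\Sym_7}$ using the character table of $\Sym_7$ and the class distribution of $H$. Alternatively, a cleaner route is to exhibit, for a representative $\mt$ of each of the six nontrivial suborbits, an explicit element of $G^*$ swapping $\ms$ and $\mt$. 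In practice this is exactly the finite check that GAP performs: generate $G^*$ acting on the $120$ spreads, compute the orbitals, and verify that there are $7$ of them and that each is symmetric.
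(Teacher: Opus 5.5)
Your proposal is correct, but it takes a genuinely different route from the paper, whose proof is just a GAP computation of the orbitals of $G^*$ on the $120$ spreads.

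You identify the permutation group abstractly. Since $\PGamL(2,8)\leq\Alt_9$, the stabiliser $H=G^*_{\ms}$ is the stabiliser of $\{0,\infty\}$ in $\PGamL(2,8)$, and it acts on $A\setminus B\cong\FF_8^*$ as $\mathrm{AGL}(1,7)$. So $G^*$ on $\mx$ is $\Sym_7$ acting on the cosets of its maximal subgroup $7{:}6$ (equivalently, by conjugation on its $120$ Sylow $7$-subgroups).

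The computations you left pending close easily, so the fallback to GAP is unnecessary. The classes $1^7$, $7$, $1\,2^3$, $1\,3^2$, $1\,6$ of $\Sym_7$ meet $H$ in $1,6,7,14,14$ elements and have $120,1,8,6,2$ fixed points on $\mx$. Burnside then gives rank $(120+6+56+84+28)/42=7$. The decomposition you describe also works out:
$\pi=1_H^{G^*}=\chi_{(7)}+\chi_{(4,3)}+\chi_{(2^3,1)}+\chi_{(4,2,1)}+\chi_{(3,2,2)}+\chi_{(4,1^3)}+\chi_{(3,1^4)}$.
This is multiplicity-free, with degrees $1,14,14,35,21,20,15$, which match the eigenspace dimensions implied by $\BP$.

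For generous transitivity there is an even quicker check. The number of self-paired orbitals equals $\frac{1}{|G^*|}\sum_{g\in G^*}\pi(g^2)$, a Burnside count of the pairs swapped by $g$. In $\Sym_7$, $232$ elements square to $1$, $720$ square to a $7$-cycle, $1120$ square to type $1\,3^2$, and none square to type $1\,2^3$ or $1\,6$. So this count is $(232\cdot 120+720+1120\cdot 6)/5040=7$, and all orbitals are self-paired.

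One correction to your criterion. All orbitals are self-paired if and only if $\pi$ is multiplicity-free and every constituent has Frobenius--Schur indicator $+1$. Real, or even rational, character values do not suffice in general. The condition does hold for $\Sym_n$, because all its irreducible representations are realisable over $\mathbb{Q}$.

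As for the trade-off: your route explains conceptually why the rank is $7$ and recovers the eigenvalue multiplicities by hand. The paper's computation additionally produces the explicit orbitals, which it needs anyway to match them with $\Inv_1,\Inv_2$ and to compute $\BP$ in Theorem~\ref{Thm:Assoc:Spreads}.
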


Next, we introduce two quantities that are invariant under the action of $G^*$.
Given a spread $\ms \in \mx$, let $\ms_i$ denote the generator in $\ms$ containing the point $Q_i$.
Given two spreads, $\ms$ and $\mt$, the number
\[
 \Inv_1(\ms,\mt) = \dim(\ms_1 \cap \mt_2) + \dim(\ms_2 \cap \mt_1)
\]
is easily checked to be $G^*$-invariant.
We define $\Inv_2(\ms,\mt)$ to be the number of generators in $\{\mt_3, \dots, \mt_9\}$ that intersect both $\ms_1$ and $\ms_2$ in a line.
This is also easily checked to be $G^*$-invariant.
Note that by $G^*$ acting generously transitively, we know that $\Inv_i(\ms,\mt) = \Inv_i(\mt,\ms)$ for $i = 1,2$.

We define relations $R_0, \dots, R_6 \subset \mx^2$, where $R_0$ is simply the identity relation, and relations $R_1, \dots, R_6$ are defined using the Table \ref{tab:Assoc1}.

\begin{table}[h!]
 \centering
 \begin{tabular}{|l||c | c | c | c|}
  \hline
  & $\Inv_2(\ms,\mt) = 0$ & $\Inv_2(\ms,\mt) = 1$ & $\Inv_2(\ms,\mt) = 2$ & $\Inv_2(\ms,\mt) = 3$  \\ \hline \hline
  $\Inv_1(\ms,\mt) = -2$ & $R_2$ & & $R_3$ & \\ \hline
  $\Inv_1(\ms,\mt) = 0$ & & $R_5$ & & $R_6$ \\ \hline
  $\Inv_1(\ms,\mt) = 2$ & $R_1$ & & $R_4$ & \\ \hline
 \end{tabular}
 \caption{Relations of the association scheme on $120$ spreads of $\mq^+(7,2)$.}
 \label{tab:Assoc1}
\end{table}

\begin{thm}[GAP]
 \label{Thm:Assoc:Spreads}
 The 6-class symmetric association scheme on $\mx$, formed by the orbitals of the generously transitive action of $G^*$, is given by the relations $R_0, \dots, R_6$ as described in Table \ref{tab:Assoc1}.
 The matrix of eigenvalues is given by
 \[
  \BP = \begin{pmatrix}
1 & 7 & 14 & 21 & 21 & 42 & 14\\
1 & 2 & 4 & -9 & 6 & -3 & -1\\
1 & -1 & -2 & 3 & 3 & -6 & 2\\
1 & -4 & -2 & -3 & 0 & 9 & -1\\
1 & 3 & -2 & 3 & -1 & 2 & -6\\
1 & -2 & 8 & 3 & -6 & -3 & -1\\
1 & 3 & -2 & -3 & -7 & 2 & 6\\
\end{pmatrix}
 \]
 There are two possible fusions of the relations that yield an $\SRG(120,56,28,24)$:
\begin{align*}
 \BP (0,0,0,0,0,1,1)^\top
 = (56,-4,8,-4,-4,-4,8)^\top, &&
 \BP (0,0,0,1,1,0,1)^\top
 = (56,-4,-4,-4,8,-4,-4)^\top.
\end{align*}
 The graph given by $R_5 \cup R_6$ is isomorphic to $\nono$.
 The graph $\Gamma^*$ given by $R_3 \cup R_4 \cup R_6$ has $G^*$ as its full automorphism group.
\end{thm}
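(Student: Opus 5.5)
This theorem is labelled ``GAP'', so the plan is to reduce each claim to a finite computation and make the reduction transparent. By Proposition~\ref{Prop:Gen Trans}, the orbitals of $G^*$ on $\mx$ are symmetric and there are seven of them, so they form a $6$-class symmetric association scheme. The first task is to identify these orbitals with the relations of Table~\ref{tab:Assoc1}. Since $\Inv_1$ and $\Inv_2$ are $G^*$-invariant, each orbital lies inside a single cell of the table. It therefore suffices to do the following:
\begin{itemize}
 \item fix one spread $\ms\in\mx$;
 \item compute the orbits of the stabiliser $G^*_\ms$ on $\mx\setminus\{\ms\}$, of sizes $7,14,14,21,21,42$;
 \item check that for each orbit representative $\mt$ the pair $(\Inv_1(\ms,\mt),\Inv_2(\ms,\mt))$ is as in the table, and that distinct orbits give distinct pairs.
\end{itemize}
Computing $\Inv_1$ and $\Inv_2$ is immediate. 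We locate $\ms_i$ and $\mt_i$ as the generators through $Q_i$, and intersect subspaces of $V$ using the weight description from (\ref{Eq:CP}). The only a priori constraint is that each $\dim(\ms_1\cap\mt_2)$ lies in $\{-1,1,3\}$ by the preceding lemma. Then $\Inv_1\in\{-2,0,2,\dots\}$, and the value $6$ is excluded because a spread containing $\ms_1$ and $\ms_2$ in swapped positions would force $\mt_1=\ms_2$, which in turn forces $Q_1\in\ms_2$, a contradiction. This is consistent with the table.

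Next, to obtain $\BP$, I would build the adjacency matrices $A_1,\dots,A_6$ as $120\times 120$ matrices. From them I compute the intersection numbers $p_{ij}^k$, for instance via $A_iA_j=\sum_k p_{ij}^kA_k$. The first row of $\BP$ should be the valencies $7,14,21,21,42,14$ in the order of $R_1,\dots,R_6$. The remaining rows can be found by diagonalising the commutative algebra $\RR[\ma]$, for example by computing the eigenvalues of a generic linear combination $\sum c_iA_i$ and projecting onto its eigenspaces. As a consistency check I would verify the orthogonality relations $\sum_i \BP(j,i)\BP(l,i)/k_i=\delta_{jl}\,120/m_j$, where the multiplicities $m_j$ are positive integers summing to $120$.

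For the fusions, enumerate all $2^6-2$ nonempty proper subsets $I\subseteq\{1,\dots,6\}$. For each, compute $\BP\sum_{i\in I}e_i$ and keep those with at most three distinct entries and first entry $56$. By the formula recalled in the preliminaries, eigenvalues $56,8,-4$ give $\lambda=8-4-32+56=28$ and $\mu=-32+56=24$. So exactly the subsets $\{5,6\}$ and $\{3,4,6\}$ should survive, and this is checked row by row.

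Finally, the two graph-theoretic claims. To show $R_5\cup R_6\cong\nono$, I would test isomorphism in GRAPE with the graph built directly on the anisotropic points of $\pg(7,2)$. Alternatively, one can observe that both graphs are rank-$3$-type $\SRG(120,56,28,24)$ and compare automorphism groups. I expect the full automorphism group of $\Gamma^*$ to be the main obstacle conceptually, since invariance only gives $G^*\le\Aut(\Gamma^*)$. Computationally, nauty via GRAPE returns $|\Aut(\Gamma^*)|=5040=|G^*|$, which settles it. A conceptual alternative would use the primitive-group classification on $120$ points: any overgroup is primitive and so is a group from the library. One would then show that none of the rank-$3$ or rank-$4$ overgroups, namely $\PGO^+(8,2)$ and $\Sym_{10}$, preserves this fusion, because the graphs $\nono$ and the $\Johnson(10,3)$ fusion are not isomorphic to $\Gamma^*$.
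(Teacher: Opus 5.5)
Your plan is correct and matches the paper, where the theorem is certified purely by a GAP/GRAPE computation of the items you list: the $G^*_\ms$-orbits matched against $\Inv_1,\Inv_2$, the eigenmatrix, the scan of valency-$56$ fusions, an isomorphism test with $\nono$, and $|\Aut(\Gamma^*)|=5040=|G^*|$.

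Two cautions. First, drop the ``conceptual alternative'' for $\Aut(\Gamma^*)$. $\Sym_{10}$ is not an overgroup of $G^*$ in this action, since no $\Sym_7\leq\Sym_{10}$ is transitive on $3$-subsets, whereas $\Alt_9$ is one: it acts on $\mx$ and preserves $R_5\cup R_6$. Nothing confines overgroups to rank $3$ or $4$ either. Second, your row-by-row check will show that the displayed $\BP$ actually gives $(56,-4,-4,8,-4,-4,8)^\top$ and $(56,-4,8,-4,-4,-4,-4)^\top$, so the printed fusion vectors have their entries permuted by a typo; with the corrected vectors the eigenvalue $8$ has multiplicity $20+15=35$ and $35$ respectively, as it must.
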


\begin{rmk}[GAP]
 \label{Rmk:extra invariant}
Another natural $G^*$-invariant is of course checking how many generators $\ms$ and $\mt$ have in common, and we can even split this into 2 $G^*$-invariants: $|\{\ms_1, \ms_2\} \cap \{\mt_1, \mt_2\}|$ and $|\{\ms_3, \dots, \ms_9\} \cap \{\mt_3, \dots, \mt_9\}|$.
If we replace $\Inv_1$ or $\Inv_2$ by this invariant, we can no longer distinguish all the relations of the association scheme.
However, this invariant gives useful information.
We describe it in Table \ref{tab:extra invariant}.
 Note that two spreads $\ms$ and $\mt$ in $\mx$ share at most one generator.
 Moreover, we can define $\nono$ as the graph on $\mx$ where two spreads are adjacent if they share no generator.

\begin{table}[h!]
 \centering
 \begin{tabular}{|c||c|c|}
 \hline
 & $|\{\ms_3, \dots, \ms_9\} \cap \{\mt_3, \dots, \mt_9\}| = 0$ & $|\{\ms_3, \dots, \ms_9\} \cap \{\mt_3, \dots, \mt_9\}| = 1$ \\ \hline \hline
 $|\{\ms_1, \ms_2\} \cap \{\mt_1, \mt_2\}| = 0$ & $R_5$, $R_6$ & $R_1$, $R_3$, $R_4$ \\ \hline
 $|\{\ms_1, \ms_2\} \cap \{\mt_1, \mt_2\}| = 1$ & $R_2$ & \\ \hline
 \end{tabular}
 \caption{An extra invariant on the association scheme.}
 \label{tab:extra invariant}
\end{table}
\end{rmk}

\section{Ovoids and anisotropic points \texorpdfstring{of $\mq^+(7,2)$}{}}

If we represent $\mq^+(7,2)$ using the vector space and quadratic form from (\ref{Eq:CP}), then the points $Q_1, \dots, Q_9$ whose coordinate vector has weight 8 form an ovoid $\mo$ of $\mq^+(7,2)$.
Clearly, the group $\Sym_9 \leq \PGO^+(8,2) \leq \PGL(8,2)$ of coordinate permutations stabilises $\mo$ setwise.
The ovoid is a frame and since $\PGL(8,2)$ acts sharply transitively on ordered frames, we see that $\Sym_9$ is the full setwise stabiliser of $\mo$ in $\PGO^+(8,2)$.
Moreover, $\Alt_9$ is the setwise stabiliser of $\mo$ that also stabilises the systems of generators.

All ovoids of $\mq^+(7,2)$ are isomorphic, see e.g.\ \cite{Cameron:Praeger}.
Thus, we can give a coordinate-free definition of the group $G^*$ from the previous section, by letting it be the setwise stabiliser of any ovoid of $\mq^+(7,2)$ that stabilises the systems of generators.
Using the principle of triality, we can translate the results of the previous section to the following proposition.

\begin{prop}
 \label{Prop:by triality}
 Let $\ms$ be a spread of $\mq^+(7,2)$, and let $\Sigma_1, \Sigma_2 \in \ms$.
 Let $G^*$ be the setwise stabiliser of $\{\Sigma_1, \Sigma_2\}$ and $\ms \setminus \{\Sigma_1, \Sigma_2\}$ in $\PGO^+(8,2)$.
 Then $G^*$ has two orbits of sizes $120$ and $840$ on the ovoids of $\mq^+(7,2)$.
 Let $\my$ denote the orbit of size $120$.
 The action of $G^*$ on $\my$ is generously transitive of rank $7$.
 Moreover, distinct ovoids in $\my$ share at most one point.
\end{prop}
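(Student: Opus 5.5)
The plan is to transport the results of Section~\ref{Sec:Assoc:Spreads} through a triality $\tau$ of $\mq^+(7,2)$. The previous section fixes the ovoid $\mo=\{Q_1,\dots,Q_9\}$ and the group $G^*_{\rm old}\leq\Alt_9$ stabilising $\{Q_1,Q_2\}$ (and hence also $\mo\setminus\{Q_1,Q_2\}$). It also fixes $\mx$, an orbit of size $120$ of spreads in one system, say $\mg_1$. Since all ovoids are isomorphic and $\Sym_9$ is the full stabiliser of $\mo$, with $\Alt_9$ its index-2 subgroup fixing the systems, $G^*_{\rm old}$ has the coordinate-free description given in the paragraph preceding the proposition.

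First, I choose $\tau$ mapping $\mp\to\mg_1\to\mg_2\to\mp$, and apply $\tau^{-1}$, which sends $\mp$ to $\mg_2$. This maps the ovoid $\mo$ to a set of $9$ generators of $\mg_2$ meeting every point exactly once: a point of the quadric corresponds to a generator of $\mg_2$, "incident" in the triality sense to a generator of $\mg_1$. In this way ovoids go to spreads of $\mg_2$, and spreads of $\mg_1$ go to ovoids.

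Conjugation by $\tau$ maps the subgroup of $\PGO^+(8,2)$ stabilising both systems onto itself. It sends the stabiliser of $\{Q_1,Q_2\}$ and $\mo\setminus\{Q_1,Q_2\}$ (which automatically fixes the systems) to the stabiliser of $\{\Sigma_1,\Sigma_2\}$ and $\ms\setminus\{\Sigma_1,\Sigma_2\}$ inside the type-preserving group. Here $\ms=\mo^{\tau^{-1}}$ and $\Sigma_i=Q_i^{\tau^{-1}}$.

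Next, I check that this stabiliser in the full $\PGO^+(8,2)$ already preserves the two systems. An element swapping the systems would map $\ms\subseteq\mg_2$ to a subset of $\mg_1$, so it cannot stabilise $\ms$. Hence the stabiliser in $\PGO^+(8,2)$ equals the conjugate of $G^*_{\rm old}$. For an arbitrary spread $\ms$ and pair $\Sigma_1,\Sigma_2$, I use transitivity of $\PGO^+(8,2)$ on spreads: all spreads are images of ovoids under triality and all ovoids are equivalent. I also need $2$-transitivity of the spread stabiliser on its members, which is the $2$-transitivity of $\Alt_9$ on $\mo$ transported by $\tau$. Together these reduce to the configuration above.

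Then I transport the orbit data. By the previous section, $G^*_{\rm old}$ has orbits of sizes $120$ and $840$ on the spreads of $\mg_1$. Under $\tau^{-1}$ these become the ovoids, since spreads of $\mg_1$ correspond bijectively to ovoids. So $G^*$ has orbits of sizes $120$ and $840$ on ovoids, and $\my=\mx^{\tau^{-1}}$. The permutation isomorphism preserves generous transitivity and rank, so Proposition~\ref{Prop:Gen Trans} gives rank $7$ and generous transitivity. Finally, two ovoids in $\my$ sharing a point correspond to two spreads in $\mx$ sharing a generator. Remark~\ref{Rmk:extra invariant} states that distinct spreads in $\mx$ share at most one generator, which gives the last claim.

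The main obstacle is making the triality correspondence precise. One must show that a set of $9$ points is an ovoid exactly when its image is a spread of the corresponding system. This is via incidence: a point $P$ lies on a generator $\pi\in\mg_1$ iff $P^\tau$ meets $\pi^\tau$ in a plane. Pairwise non-collinearity of points then translates to pairwise disjointness of generators in one system, using the fact that two generators of the same system meet in a line or are disjoint, mirroring the fact that two points are collinear or not. I would invoke the standard dictionary for triality (collinear points $\leftrightarrow$ generators of the same system meeting in a line) rather than reprove it. Since $\tau$ may not be induced by any element of $\PGL$, I would also note explicitly that conjugation by $\tau$ is only an automorphism of the type-preserving group.
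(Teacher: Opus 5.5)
Your proposal is correct and is essentially the paper's argument: the paper just invokes triality to carry the Section~3 results (the $120/840$ orbit split, Proposition~\ref{Prop:Gen Trans}, and Remark~\ref{Rmk:extra invariant}) over from spreads to ovoids. Your version also spells out details the paper leaves implicit: the full $\PGO^+(8,2)$-stabiliser of the spread data is automatically type-preserving, and transitivity plus $2$-transitivity reduce everything to a single configuration.

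One slip: the full stabiliser of $\{Q_1,Q_2\}$ and $\mo\setminus\{Q_1,Q_2\}$ is $\Sym_2\times\Sym_7$, which contains system-swapping odd permutations, so your parenthetical should refer to $G^*_{\mathrm{old}}\leq\Alt_9$ instead. The same fact, that $\Sym_9$ contains system-swapping elements, is also what gives the type-preserving transitivity on ovoids that you need in order to transport through $\tau$.
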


This means that any ovoid $\mo \in \my$ is uniquely determined by the points $\mo \cap \Sigma_1$ and $\mo \cap \Sigma_2$.
On the other hand, there are 15 choices for a point $P_1 \in \Sigma_1$, and 8 choices for a point $P_2 \in \Sigma_2 \setminus P_1^\perp$, which yields a total of $15 \cdot 8 = 120 = |\my|$ pairs of non-collinear points.
This gives a one-to-one correspondence.
On the other hand, since the vector space underlying $\pg(7,2)$ is a direct sum of the subspaces corresponding to $\Sigma_1$ and $\Sigma_2$, given a point $P = \vspan x \notin \Sigma_1 \cup \Sigma_2$, there is a unique way to write $x$ as a linear combination of a vector in $\Sigma_1$ and $\Sigma_2$ seen as vector subspaces.
This means that there is a unique line $\ell_P$ through $P$ that intersects both $\Sigma_1$ and $\Sigma_2$.
Note that given two points $P_1 \in \Sigma_1$ and $P_2 \in \Sigma_2$, the line $P_1 P_2$ contains one extra point $P_3$, and this point belongs to $\mq^+(7,2)$ if and only if $P_1 \perp P_2$.
Therefore, there is a one-to-one correspondence between anisotropic points and pairs of non-collinear points in $\Sigma_1, \Sigma_2$.

To summarise, for each anisotropic point $P$, there is a unique line $\ell_P$ through $P$ that intersects $\Sigma_1$ and $\Sigma_2$, and a unique ovoid $\mo_P \in \my$ that contains $\Sigma_1 \cap \ell_P$ and $\Sigma_2 \cap \ell_P$.
This gives a one-to-one correspondence between anisotropic points and the ovoids of $\my$, and this correspondence is $G^*$-invariant.

In particular, this allows us to translate the association scheme from Section \ref{Sec:Assoc:Spreads} to an association scheme on the anisotropic points.
The relations of this scheme are best described by not only considering the anisotropic points, but also their corresponding ovoids.
We will denote the ovoid of $\my$ corresponding to the anisotropic point $P$ by $\mo_P$.
Given anisotropic points $P$ and $Q$, a useful $G^*$-invariant property is the number of points in $\mo_P$ orthogonal to $Q$.
There are only two options.

\begin{lm}
 Let $\mo$ be an ovoid of $\mq^+(7,2)$, and $P$ an anisotropic point.
 Then one of the following holds:
 \begin{enumerate}
  \item Exactly one line through $P$ intersects $\mo$ in 2 points, and $|P^\perp \cap \mo| = 7$.
  \item No line through $P$ intersects $\mo$ in 2 points, and $|P^\perp \cap \mo| = 3$.
 \end{enumerate}
\end{lm}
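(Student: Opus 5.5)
Since all ovoids of $\mq^+(7,2)$ are isomorphic, we may work in the model (\ref{Eq:CP}) with $\mo = \{Q_1,\dots,Q_9\}$, where $Q_i$ is the weight-$8$ vector $\one + e_i$ (with $\one$ the all-one vector of $\FF_2^9$). An anisotropic point $P = \vspan x$ has $x \in V$ of even weight $2$ or $6$. Two reductions make the statement a count. First, the polarity of $\mq^+(7,2)$ is induced by the bilinear form $B(x,y)=f(x+y)+f(x)+f(y)$. Second, a line through $P$ meeting $\mo$ in two points $Q_i,Q_j$ must be the line $\{Q_i,Q_j,Q_i+Q_j\}$, and here $Q_i+Q_j = e_i+e_j$. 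Hence such a line exists if and only if $x$ is a weight-$2$ vector $e_i+e_j$, and it is unique because $\{i,j\}$ is determined by $x$. This already separates the two cases: the anisotropic points of weight $2$ lie on exactly one $2$-secant, and those of weight $6$ lie on none.

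It remains to compute $|P^\perp \cap \mo|$ in each case. I will first obtain a closed formula for $B$ on $V$. For $x,y\in V$, write $x=\sum_{a\in X}e_a$ with $X$ the support of $x$. Expanding $f(X)=\sum_{i<j}X_iX_j$ gives $B(x,y)=\sum_{i\neq j}x_iy_j = (\sum_i x_i)(\sum_j y_j) - \sum_i x_iy_i$. On $V$ the coordinate sums vanish, so
\[
B(x,y)=|{\rm supp}(x)\cap{\rm supp}(y)| \pmod 2 .
\]
Then $Q_k \perp P$ exactly when $|{\rm supp}(x)\cap (A\setminus\{k\})|$ is even. This number equals $|{\rm supp}(x)|$ if $k\notin{\rm supp}(x)$, and $|{\rm supp}(x)|-1$ otherwise. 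Since $|{\rm supp}(x)|$ is even, $Q_k\perp P$ if and only if $k\notin {\rm supp}(x)$. Therefore $|P^\perp\cap\mo| = 9-{\rm wt}(x)$, which is $7$ for weight $2$ and $3$ for weight $6$, as claimed.

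The main point needing care is that the reduction to this model is legitimate. The statement is invariant under $\PGO^+(8,2)$, and I will invoke the cited fact that this group is transitive on ovoids (all ovoids are isomorphic). As a sanity check, the counts agree with the orders of the point sets: there are $\binom92=36$ anisotropic points of weight $2$ and $\binom96=84$ of weight $6$, totalling $120$.
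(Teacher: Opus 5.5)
Your proof is correct and follows essentially the same route as the paper. You reduce to the model (\ref{Eq:CP}) with $\mo$ the weight-$8$ points and split into anisotropic points of weight $2$ and $6$. The only difference is cosmetic: you compute $B(x,y)$ as the parity of $|\mathrm{supp}(x)\cap\mathrm{supp}(y)|$, whereas the paper decides orthogonality by checking whether the third point $v+w$ of the line $PQ_k$ is anisotropic (i.e.\ whether the line is tangent), which amounts to the same calculation.
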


\begin{proof}
 We can use the representation of (\ref{Eq:CP}) for $V$ and $f$.
 Let $e_1, \dots, e_9$ denote the standard basis vectors of $\FF_2^9$.
 We can apply a coordinate transformation such that $\mo$ consists of the weight 8 points.
 Let $P$ be an anisotropic point with coordinate vector $v$.
 If $v$ has weight $2$, then $v = e_i + e_j$.
 Given a vector $w = \one + e_k$ of weight $8$, $v+w$ has weight $8$ if $k \in \{i,j\}$, and weight $6$ otherwise.
 Hence, $P$ is orthogonal the points $\vspan{\one + e_k}$ with $k \notin \{i,j\}$, and $P$, $\vspan{\one + e_i}$, $\vspan{\one + e_j}$ lie on the same line.
 If $v$ has weight $6$, then $v = \one + e_i + e_j + e_k$.
 Given a vector $w = \one + e_h$ of weight 8, $v+w$ has weight 2 if $h \in \{i,j,k\}$, and weight 4 otherwise.
 Thus, no two points of weight 8 lie on the same line through $P$, and there are exactly 3 points of $\mo$ which lie on a tangent line to $\mq^+(7,2)$ through $P$, hence are orthogonal to $P$.
\end{proof}

We describe some other $G^*$-invariants that help us distinguish the $G^*$-orbitals.
Consider the intersection of $\mo_P$ and $\mo_Q$.
We know from Remark \ref{Rmk:extra invariant} that they can share at most one point.
We can distinguish whether or not this point belongs to $\Sigma_1 \cup \Sigma_2$ or not.
In addition, the graph on $\my$ with the adjacency relation of being disjoint yields the graph $\nono$.
Unsurprisingly, we checked with GAP that $\mo_P \cap \mo_Q = \varnothing$ if and only if $P \not \perp Q$.
Note also that if $\ell_P$ is the unique line through $P$ intersecting $\Sigma_1$ and $\Sigma_2$, then $\ell_P \cap \Sigma_1 = \vspan{P, \Sigma_2} \cap \Sigma_1$ and $\ell_P \cap \Sigma_2 = \vspan{P, \Sigma_1} \cap \Sigma_2$.
Thus, if $\mo_P$ and $\mo_Q$ intersect in a point $S \in \Sigma_i$ with $i \in \{1,2\}$, then $\vspan{P, \Sigma_{3-i}} = \vspan{Q, \Sigma_{3-i}}$, which means that $PQ$ intersects $\Sigma_{3-i}$.
This brings us to the second $G^*$-invariant to consider: whether $PQ$ intersects $\mo_P$ (in which case $PQ$ also intersects $\mo_Q$ by $G^*$ acting generously transitively, and hence $\mo_P$ and $\mo_Q$ intersect), whether it intersects $\Sigma_1 \cup \Sigma_2$ (which is equivalent to $\mo_P$ and $\mo_Q$ sharing a point of $\Sigma_1 \cup \Sigma_2$), or whether it intersects neither.
These invariants suffice to translate the association scheme of Section \ref{Sec:Assoc:Spreads} to an association scheme on the anisotropic points.

\begin{thm}[GAP]
 \label{Thm:Assoc:Ovoids}
 Let $\ms$, $\Sigma_1$, $\Sigma_2$, $G^*$, and $\my$ be as in Proposition \ref{Prop:by triality}.
 For each anisotropic point $P$, let $\mo_P$ be the corresponding ovoid of $\my$.
 Let $\mz$ be the set of anisotropic points of $\pg(7,2)$, let $R_0$ be the identity relation on $\mz$, and define the relations $R_1, \dots, R_6$ using the Table \ref{tab:Assoc2}.
 Then $\{R_0, \dots, R_6\}$ is the 6-class symmetric association scheme arising from the action of $G^*$ on $\mz$, or equivalently on $\my$.
 This association scheme is isomorphic to the association scheme from Theorem \ref{Thm:Assoc:Spreads}, respecting the order of the relations.
 The fusion $R_5 \cup R_6$ yields the graph $\nono$.
 The fusion $R_3 \cup R_4 \cup R_6$ yields another $\SRG(120,56,28,24)$ $\Gamma^*$ with $G^*$ as full automorphism group.

 \begin{table}[h!]
 \centering
 \begin{tabular}{|c||c|c|} \hline
   & $|\mo_P \cap Q^\perp| = 3$ & $|\mo_P \cap Q^\perp| = 7$ \\ \hline \hline
   $\mo_P \cap \mo_Q$ is a point in $\Sigma_1 \cup \Sigma_2$ & $R_2$ & \\
   $P \perp Q$ & & \\ \hline
   $\mo_P \cap \mo_Q$ is a point outside $\Sigma_1 \cup \Sigma_2$ & $PQ$ intersects $\mo_P $: $R_1$ & $R_4$ \\
   $P \perp Q$ & $PQ$ does not intersect $\mo_P$: $R_3$ & \\ \hline
   $\mo_P \cap \mo_Q = \varnothing$ & $R_5$ & $R_6$\\
   $P \not \perp Q$ & & \\ \hline
 \end{tabular}
 \caption{The relations of the 6-class association scheme on the anisotropic points of $\mq^+(7,2)$.}
 \label{tab:Assoc2}
\end{table}

\end{thm}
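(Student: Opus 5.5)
The theorem is labelled [GAP], so the plan is a computational verification organised around the structure already established, with conceptual arguments where they are cheap. First I set up a concrete model. I take $V$ and $f$ as in (\ref{Eq:CP}) and apply the triality map $\tau$, realised in FinInG, to a spread $\ms$ in one system. Alternatively, I fix two disjoint generators $\Sigma_1,\Sigma_2$ of a spread $\ms$ directly. I then compute $G^*$ as the stabiliser in $\PGO^+(8,2)$ of $\{\Sigma_1,\Sigma_2\}$ and of $\ms\setminus\{\Sigma_1,\Sigma_2\}$. Its orbits on ovoids give $\my$; by Proposition \ref{Prop:by triality}, $\my$ has size $120$, and the bijection $P\mapsto \mo_P$ is the one described before the statement. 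Because this bijection is $G^*$-equivariant, the orbitals of $G^*$ on $\mz$ and on $\my$ coincide. Generous transitivity and rank $7$ on $\mz$ then follow from Proposition \ref{Prop:by triality}. Hence the orbitals form a symmetric $6$-class scheme.

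Second, I check that Table \ref{tab:Assoc2} describes these orbitals. Each table entry is defined by a $G^*$-invariant quantity: $|\mo_P\cap Q^\perp|\in\{3,7\}$ (by the preceding lemma), whether $\mo_P\cap\mo_Q$ lies in $\Sigma_1\cup\Sigma_2$, and whether $PQ$ meets $\mo_P$. So each entry is a union of orbitals. I fix a base point $P$, compute the $G^*_P$-orbits on $\mz\setminus\{P\}$, and evaluate the invariants on one representative per orbit. I expect the orbit sizes $7,14,21,21,42,14$ to land in six distinct cells, which shows the cells are exactly the orbitals. Two facts feed in here. The identification $P\perp Q \iff \mo_P\cap\mo_Q\neq\varnothing$ comes from the earlier GAP check. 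The rows are well defined because distinct ovoids in $\my$ share at most one point.

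Third, I transport the scheme to the spread scheme. Triality $\tau^{-1}$ maps ovoids of $\my$ to spreads, and conjugation carries $G^*$ to the group of Section \ref{Sec:Assoc:Spreads}, so this gives a $G^*$-equivariant bijection $\my\to\mx$. Orbitals therefore correspond. To match the labelling, I take the base representatives of each $R_i$ from Table \ref{tab:Assoc2}, map them to pairs of spreads, and compute $\Inv_1,\Inv_2$ to read off the label in Table \ref{tab:Assoc1}. A cheaper cross-check is to compare valencies and the eigenvalue matrix $\BP$ of Theorem \ref{Thm:Assoc:Spreads}.

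The fusion statements then follow from Theorem \ref{Thm:Assoc:Spreads}. The statement that $R_5\cup R_6$ equals the non-orthogonality graph, i.e.\ $\nono$, is read directly off the third row of the table. I expect the main obstacle to be matching the labels across the two models. The valencies $21,21$ and $14,14$ coincide, so they do not separate the relations, and I must compute the invariants explicitly through triality. If the two $14$'s or two $21$'s turn out swapped, the row/column of $\BP$ tells me which assignment is correct.
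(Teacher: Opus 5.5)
Your proposal is correct and takes the same approach as the paper. The theorem is a GAP verification built on the $G^*$-equivariant bijection $P \mapsto \mo_P$, with triality used to transport the scheme of Theorem~\ref{Thm:Assoc:Spreads}. Your bookkeeping is what that computation amounts to: the table's quantities are $G^*$-invariant, so six orbitals landing in six distinct cells identifies the cells with the orbitals, and the labels are then fixed either by computing $\Inv_1,\Inv_2$ or by the pairwise distinct columns of $\BP$.
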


\section{Comparison with known constructions of \texorpdfstring{$\SRG(120,56,28,24)$}{}}

We listed several constructions of $\SRG(120,56,28,24)$'s in the introduction.
In this section, we compare $\Gamma^*$ with several of these constructions, and prove that the graphs are not isomorphic.
We can easily exclude the other known vertex-transitive $\SRG(120,56,28,24)$'s by the rank of the action of the automorphism group.
We will prove that $\Gamma^*$ cannot be obtained from the constructions of Wallis \cite{Wallis}, Goethals and Seidel \cite{Goethals:Seidel}, or as the non-collinearity graph of a partial geometry.
We do this by inspecting the maximum cocliques of $\Gamma^*$ using GAP.

An $\SRG(120,56,28,24)$ has eigenvalues $56, 8, -4$.
By the Hoffman coclique bound, see e.g.\ \cite[Proposition 1.1.7]{Brouwer:VanMaldeghem}, a coclique $C$ in an $\SRG(120,56,28,24)$ has size at most 
$120 /( \frac{56}4 +1) = 8$, and if equality holds, every vertex outside $C$ has $4$ neighbours inside $C$.

\begin{prop}[GAP]
 \label{Prop:Cocliques}
 Consider the $\SRG(120,56,28,24)$ $\Gamma^*$ as defined in Theorem \ref{Thm:Assoc:Ovoids}.
 Let $G^*$ be its automorphism group.
 \begin{enumerate}
  \item $G^*$ has $3$ orbits $\mc_1, \mc_2, \mc_3$ on the cocliques of $\Gamma^*$ of size $8$.
  Their sizes are $|\mc_1| = 30$, $|\mc_2| = 120$, and $|\mc_3| = 210$.
  \item \label{item:nb} If $C$ is a coclique in $\mc_1$, $\mc_2$, or $\mc_3$, the neighbourhoods of the vertices outside $C$ intersect $C$ in respectively $14$, $49$, or $26$ different ways.
  \item \label{item:C1} The cocliques in $\mc_1$ are of the following form: take a point $Q \in \Sigma_i$ with $i \in \{1,2\}$, and let $C$ consist of the 8 anisotropic points $P$ such that $PQ$ intersects $\Sigma_{3-i}$.
  Note that any two points in $C$ are in relation $R_2$.
  We denote this coclique by $C_{1,Q}$.

  If $Q$ and $Q'$ are distinct points of $\Sigma_i$, then all neighbourhoods of a vertex in $C_{1,Q'}$ intersect $C_{1,Q}$ in a different $4$-set.
  \item The cocliques in $\mc_2$ are of the following form:
  Take an anisotropic point $P$, and let $C$ consist of $P$ and the $7$ points in relation $R_1$ with respect to $P$.
  Any two points in $C \setminus \{P\}$ are in relation $R_5$.

  \item \label{item:pg} The largest collection of maximum cocliques in $\Gamma^*$ that pairwise share at most one vertex has size $66$.
 \end{enumerate}
\end{prop}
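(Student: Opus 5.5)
This is a computational statement, so the plan is to set up an explicit model of $\Gamma^*$ and compute, verifying by hand whatever has a conceptual reason. We use the coordinates of (\ref{Eq:CP}). We fix a spread $\ms$ and two of its generators $\Sigma_1,\Sigma_2$, take $G^*$ as in Proposition \ref{Prop:by triality}, and take $\mz$ to be the 120 anisotropic points. The edge set is $R_3\cup R_4\cup R_6$, read off from Table \ref{tab:Assoc2}. The Hoffman bound gives $\alpha(\Gamma^*)\le 8$, and every $8$-coclique $C$ is tight, so every outside vertex has exactly $4$ neighbours in $C$.

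To enumerate the $8$-cocliques we fix a vertex $P$ up to $G^*$, using transitivity. Since $\Gamma^*$ has valency $56$, the remaining vertices of $C$ lie among the $63$ non-neighbours of $P$, in relations $R_1,R_2,R_5$. A clique search in the complement restricted to this set (GRAPE's \texttt{CompleteSubgraphs} or a direct backtrack) finds all $7$-cocliques there. We then let $G^*$ act on the resulting $8$-sets, form the orbits, and obtain $\mc_1,\mc_2,\mc_3$ with sizes $30,120,210$. Counting flags gives a consistency check: $(30+120+210)\cdot 8/120 = 24$ maximum cocliques should pass through each vertex. For item \ref{item:nb}, for each orbit representative $C$ we record the $4$-sets $N(x)\cap C$ for $x\notin C$ and count the distinct ones.

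The explicit descriptions can be checked partly by hand. For $C_{1,Q}$ with $Q\in\Sigma_1$, the points $P$ with $PQ$ meeting $\Sigma_2$ are the third points on the lines $QP_2$ with $P_2\in\Sigma_2\setminus Q^\perp$, giving $8$ points. Each such $\ell_P$ passes through $Q$, so all the ovoids $\mo_P$ share the point $Q\in\Sigma_1$, and any two of these points are in relation $R_2$, which is a non-edge. The orbit then consists of the $2\cdot15=30$ choices of $Q$, matching $|\mc_1|$. For $\mc_2$ we check that $P$ together with its $7$ neighbours in $R_1$ (where $k_1=7$) forms a coclique. That these $7$ points are pairwise in $R_5$ follows from the intersection numbers $p^1_{11}$, which are computed from $\BP$ and must vanish away from $R_5$. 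Matching the orbit size $120$ with the one set per vertex then identifies $\mc_2$. The statement in item \ref{item:C1} about $C_{1,Q'}$ is a direct check for a single pair $Q,Q'$, since the stabiliser of $\Sigma_1$ in $G^*$ is transitive on its points.

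The main obstacle is item \ref{item:pg}: finding a maximum family among the $360$ cocliques that pairwise share at most one vertex. This is a maximum clique problem on a $360$-vertex compatibility graph. We would attack it with GRAPE's clique routines, using the $G^*$-symmetry to fix the first member in each orbit and to prune. The result is $66$, and we certify it by exhibiting a family of size $66$ and running an exhaustive search with symmetry breaking to rule out $67$.
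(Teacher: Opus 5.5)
Your plan is essentially the paper's: every item of this proposition is established there purely by GAP/GRAPE computation. Your hand checks are correct extras: the $C_{1,Q}$ are pairwise-$R_2$ cocliques forming one $G^*$-orbit of size $30$, and $\{P\}\cup R_1(P)$ is a coclique because $\BP$ gives $A_1^2=7I+A_5$, i.e.\ $p^k_{11}=0$ for $k\notin\{0,5\}$ (this is what you wrote as $p^1_{11}$).

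One slip: reducing the final claim of item~3 to a single pair $(Q,Q')$ needs the stabiliser $\Alt_7$ of $\Sigma_1$ in $G^*$ to be $2$-transitive on the $15$ points of $\Sigma_1$, not merely transitive. This holds, since that stabiliser sits inside $\Alt_8\cong\mathrm{GL}(4,2)$ acting naturally on $\Sigma_1$. Alternatively, simply check one $Q$ against all $14$ choices of $Q'$.
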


First consider the construction of Wallis \cite[Theorem 1]{Wallis}, \cite{WallisCorr} and the complement of the construction of Goethals and Seidel \cite[Theorem 2.4]{Goethals:Seidel} (see also e.g.\ \cite[\S 8.C]{Brouwer:VanLint}).
In both cases, one can construct an $\SRG(120,56,28,24)$ by starting from a $2$-$(15,3,1)$ design $\mb$ on ground set $A$, and taking as vertex set some set that naturally has the structure of $A \times B$ for some $8$-element set $B$.
For any $a \in A$, the induced subgraph on $\{a\} \times B$ is a coclique of size $8$, and for any block $\{a_1,a_2,a_3\} \in \mb$ the induced subgraph on $\{a_1,a_2,a_3\} \times B$ are two disjoint copies of the complete tripartite graph $K_{4,4,4}$.
This means that $\Pi = \sett{\{a\} \times B}{a \in A}$ is a partition of the vertices into 15 pairwise disjoint cocliques.
For every 4-element subset $D$ of a coclique $C \in \Pi$, the number of vertices outside $C$ whose neighbourhood intersects $C$ in $D$ is a multiple of $8$.
In particular, the neighbourhoods of the vertices outside $C$ intersect $C$ in at most $\frac{120-8}8 = 14$ different ways.

If such a partition $\Pi$ exists in $\Gamma^*$, then by Proposition \ref{Prop:Cocliques} (\ref{item:nb}), all cocliques in $\Pi$ belong to $\mc_1$.
It is easy to see that the only way to partition the vertices of $\Gamma^*$ into cocliques of $\mc_1$ is by taking the $C_{1,Q}$ where all $Q$ belong to the same generator $\Sigma_i$, $i \in \{1,2\}$.
But by Proposition \ref{Prop:Cocliques} (\ref{item:C1}), the induced subgraph on $C_{1,Q} \cup C_{1,Q'}$ is not isomorphic to 2 copies of $K_{4,4}$.
Hence, such a partition $\Pi$ does not exist in $\Gamma^*$.

Secondly, the complement graph $\Gamma$ of the collinearity graph of a partial geometry with parameters $(7,8,4)$ is also an $\SRG(120,56,28,24)$, see e.g.\ \cite[\S 8.6]{Brouwer:VanMaldeghem}.
Such a partial geometry has $135$ lines, which give $135$ cocliques of size $8$ in $\Gamma$ that pairwise intersect in at most one vertex.
We know from Proposition \ref{Prop:Cocliques} (\ref{item:pg}) that $\Gamma^*$ does not have such a collection of $135$ cocliques.

We conclude that $\Gamma^*$ cannot be constructed using the constructions of Wallis \cite{Wallis} or Goethals--Seidel \cite{Goethals:Seidel}, nor does it arise as the non-collinearity graph of a partial geometry.

\section*{Acknowledgements}

The first author was supported by grant 12A3Y25N of Research Foundation Flanders (FWO).  The second author is supported by an NSERC Discovery Grant.  The fourth author is supported by
the SFB-TRR 195 ``Symbolic Tools in Mathematics and their Application'' of the German Research Foundation (DFG).  The research in this paper was also partially supported by OZR3637 startup-bonus ``Discrete Structures, and their applications in Data Science''.
The second and fourth authors would like to thank the Vrije Universiteit Brussel for its hospitality. 

\newcommand{\etalchar}[1]{$^{#1}$}

\end{document}